\newtheorem{theorem}{Theorem}
\newtheorem{corollary}{Corollary}
\newtheorem{proposition}{Proposition}
\newtheorem{lemma}{Lemma}
\newenvironment{definition}
{\smallskip\noindent{\bf Definition\/}:}{\smallskip\par}
\newenvironment{remark}
{\smallskip\noindent{\bf Remark\/}.}{\smallskip\par}
\newenvironment{remarks}
{\smallskip\noindent{\bf Remarks\/}.}{\smallskip\par}
\newenvironment{proof}{\begin{ProofwCaption}{Proof}}{\end{ProofwCaption}}
\newenvironment{proof*}[1]{\begin{ProofwCaption}{{#1}}}{\end{ProofwCaption}}
\newenvironment{ProofwCaption}[1]%
  {\addvspace\theorempreskipamount \noindent{\it #1.}\rm}%
  {\qed \par \addvspace\theorempostskipamount}
\newcommand{\qedsymbol}{\mbox{$\Box$}}
\newcommand{\qed}{\hfill\qedsymbol}
\newcommand{\CC}{{\mathbb C}}
\newcommand{\ZZ}{{\mathbb Z}}
\newcommand{\Kring}[1]{K_0(\mbox{f.}{#1}\,\mbox{-sets})}
\title{Saito duality between Burnside rings for invertible polynomials}
\author{Wolfgang Ebeling and Sabir M.~Gusein-Zade
\thanks{Partially supported by the DFG Mercator program (INST 187/490-1), the Russian government grant 11.G34.31.0005, RFBR--10-01-00678,
NSh--8462.2010.1.
Keywords: group actions, Burnside rings, zeta functions, Saito duality, invertible polynomials.
AMS 2010 Math. Subject Classification: 14J33, 32S40, 19A22.
}
}
\date{}
\begin{document}
\selectlanguage{english}

\maketitle

\begin{abstract}
We give an equivariant version of the Saito duality which can be
regarded as a Fourier transformation on Burnside rings. We show that
(appropriately defined) reduced equivariant monodromy zeta functions of Berglund--H\"ubsch dual invertible
polynomials are Saito dual to each other with respect to their groups of diagonal symmetries. Moreover we show that the relation between ``geometric roots'' of the
monodromy zeta functions for some pairs of Berglund--H\"ubsch dual invertible polynomials described in
a previous paper is a particular case of this duality.
\end{abstract}

\section*{Introduction} 
In a number of papers it was shown that the Poincar\'e series of some natural filtrations on the rings
of germs of functions on singularities are related (sometimes coincide) with appropriate monodromy
zeta functions. In some cases (see, e.g., %% \cite{E},
\cite{MRL}) this relation is described in terms
of the so-called Saito duality (\cite{Saito1}, \cite{Saito2}).
This duality also participates in relations between
monodromy zeta functions of Berglund--H\"ubsch dual invertible polynomials: \cite{ET1}, \cite{MMJ}.
Non-degenerate invertible polynomials are the potentials of invertible Landau--Ginzburg models in string theory: \cite{Kreuzer}.
Berglund--H\"ubsch dual non-degenerate invertible polynomials are particular cases of the homological mirror
symmetry for hypersurface singularities: \cite{ET1}. In \cite{ET2}, it was shown that this symmetry
can be extended to orbifold Landau--Ginzburg models, i.e.\ to pairs $(f,G)$ consisting of an invertible polynomial $f$ and a certain abelian group
$G$ of its symmetries. This gives a hint that there can exist an equivariant version of the Saito
duality which participates in relations between monodromies of dual invertible polynomials with
fixed symmetry groups. 
Here we give an equivariant version of the Saito duality which can be
interpreted as a Fourier transformation on Burnside rings. This gives a better understanding of the Saito duality. We show that
(appropriately defined) reduced equivariant monodromy zeta functions of Berglund--H\"ubsch dual invertible
polynomials are Saito dual to each other with respect to their groups of diagonal symmetries. Moreover we show that the relation between ``geometric roots'' of the reduced
monodromy zeta functions for some pairs of Berglund--H\"ubsch dual invertible polynomials described in
\cite{MMJ} is a particular case of this duality.

Saito duality is a duality between rational functions of the form 
\begin{equation}\label{phi}
\varphi(t)=\prod\limits_{m|d}(1-t^m)^{s_m}
\end{equation}
with a fixed positive integer $d$. The Saito dual of $\varphi$ with respect to $d$ is
\begin{equation}\label{Saito}
\varphi^\ast(t) = \prod\limits_{m|d}(1-t^{d/m})^{-s_m}.
\end{equation}

For example, the characteristic polynomials of the classical monodromy operators (or, equivalently, the reduced monodromy zeta functions) of the dual (in the sense of Arnold's strange duality)
pairs of the 14 exceptional unimodular singularities in three variables are Saito dual to each other
with $d$ being the quasidegree of their quasihomogeneous representatives.

The reason for the minus sign in the exponent in the classical
definition of the Saito dual (\ref{Saito}) is connected with
the fact that initially it was applied only to surface
singularities. One can say that, for hypersurface
singularities in $\CC^n$, one should define the Saito dual of
$\varphi$ as $\prod\limits_{m|d}(1-t^{d/m})^{(-1)^{n}s_m}$
(or to say that the reduced monodromy zeta function of an exceptional unimodular singularity with the number of variables different from $3$
is either Saito dual to the reduced monodromy zeta function of its
dual counterpart or is inverse to the dual one). This is the reason why we keep the definition (\ref{Saito}) for rational
functions but shall not follow the sign convention in the
definition of the equivariant version of the Saito duality
below.

%%%%%%%%%%%%%%%%
\section{Symmetries of invertible polynomials}\label{symmetries}
%%%%%%%%%%%%%%%%
A quasihomogeneous polynomial $f$ in $n$ variables is called
{\em invertible} (see \cite{Kreuzer}) if it is of the form
\begin{equation}\label{inv}
f(x_1, \ldots, x_n)=\sum\limits_{i=1}^n a_i \prod\limits_{j=1}^n x_j^{E_{ij}}
\end{equation}
for some coefficients $a_i\in\CC^\ast$ and for a matrix
$E=(E_{ij})$ with non-negative integer entries and with $\det E\ne 0$.
Without loss of generality one may assume that $a_i=1$
for $i=1, \ldots, n$. (This can be achieved by a rescaling of the variables $x_j$.)
An invertible quasihomogeneous polynomial $f$ is {\em non-degenerate} if it has (at most) an isolated critical point at the origin in $\CC^n$.

According to \cite{KS}, an invertible polynomial $f$ is non-degenerate if and only if it is a (Thom-Sebastiani) sum of invertible polynomials in groups of different variables of the following types:
\begin{enumerate}
\item[1)] $x_1^{p_1}x_2 + x_2^{p_2}x_3 + \ldots + x_{m-1}^{p_{m-1}}x_m + x_m^{p_m}x_1$ (loop type; $m\ge 2$);
\item[2)] $x_1^{p_1}x_2 + x_2^{p_2}x_3 + \ldots + x_{m-1}^{p_{m-1}}x_m + x_m^{p_m}$ (chain type; $m\ge 1$).
\end{enumerate}

\begin{sloppypar}

An invertible polynomial (\ref{inv}) has a canonical system
of weights ${\bf w}=(w_1, \ldots, w_n;d_f)$, where $w_i$ is the
determinant of the matrix $E$ with the $i$th column
substituted by $(1, \ldots, 1)^T$ and $d_f=\det E$. One has
$f(\lambda^{w_1}x_1, \ldots, \lambda^{w_n}x_n) = \lambda^d f(x_1, \ldots, x_n)$.
The canonical system of weights may be non-reduced, i.e.\ one may have $c_f=\gcd(w_1, \ldots, w_n)\ne 1$. The reduced system
of weights is $\overline{\bf w}=(\overline{w}_1, \ldots, 
\overline{w}_n; \overline{d}_f)=(w_1/c_f, \ldots, w_n/c_f; d_f/c_f)$.

\end{sloppypar}

The {\em Berglund-H\"ubsch transpose} $\widetilde{f}$ of the invertible polynomial (\ref{inv}) is defined by
$$
\widetilde{f}(x_1, \ldots, x_n)=\sum\limits_{i=1}^n a_i \prod\limits_{j=1}^n x_j^{E_{ji}}\,.
$$
If the invertible polynomial $f$ is non-degenerate, then $\widetilde{f}$ is non-degenerate as well.

If the canonical system of weights of $f$ is reduced, the canonical system of weights of $\widetilde{f}$ can be non-reduced
(see examples in \cite{MMJ}).
The canonical quasidegrees of $f$ and $\widetilde{f}$ coincide.

Let $f$ be a quasihomogeneous polynomial in $n$ variables.

\begin{definition}
The (diagonal) {\em symmetry group} of $f$ is the group
$$
G_f=\{(\lambda_1, \ldots, \lambda_n)\in (\CC^*)^n:
f(\lambda_1 x_1, \ldots, \lambda_n x_n)= f(x_1, \ldots, x_n)\}\,,
$$ 
i.e. the group of diagonal linear transformations of $\CC^n$ preserving $f$.
\end{definition}

For an invertible polynomial $f=\sum\limits_{i=1}^n \prod\limits_{j=1}^n x_j^{E_{ij}}$
the symmetry group $G_f$ is finite and is generated by the elements
$$
\sigma_j=(\exp(2\pi i\cdot a_{1j}), \ldots, \exp(2\pi i\cdot a_{nj}))
$$
corresponding to the columns of
the matrix $E^{-1}=(a_{kj})$ inverse to the matrix $E$ of the exponents of $f$. This implies the
following statement

\begin{proposition} {\rm (\cite{Kreuzer}).}
$\vert G_f\vert=d_f$.
\end{proposition}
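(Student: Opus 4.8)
\medskip\noindent\textbf{Proof proposal.}
The plan is to realise $G_f$ as the kernel of an explicit monomial homomorphism of algebraic tori and then to read off the order of that kernel from the Smith normal form of the exponent matrix $E$ (equivalently, as an index of sublattices of $\ZZ^n$).

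First I would pin down $G_f$ precisely. Since $\det E\ne 0$, the rows of $E$ are pairwise distinct, so the monomials $\prod_{j}x_j^{E_{ij}}$, $i=1,\dots,n$, are linearly independent in $\CC[x_1,\dots,x_n]$. Hence, for $\lambda=(\lambda_1,\dots,\lambda_n)\in(\CC^\ast)^n$, the equality $f(\lambda_1x_1,\dots,\lambda_nx_n)=\sum_i\bigl(\prod_j\lambda_j^{E_{ij}}\bigr)\prod_j x_j^{E_{ij}}=f(x_1,\dots,x_n)$ holds if and only if $\prod_j\lambda_j^{E_{ij}}=1$ for every $i$. In other words, $G_f=\ker\Phi_E$, where $\Phi_E\colon(\CC^\ast)^n\to(\CC^\ast)^n$ is the homomorphism $\Phi_E(\lambda)=\bigl(\prod_j\lambda_j^{E_{ij}}\bigr)_{i=1}^n$. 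Writing $\lambda_j=\exp(2\pi i\,\psi_j)$ turns this condition into $\psi\in E^{-1}\ZZ^n$, which identifies $G_f$ with the image of the lattice $E^{-1}\ZZ^n\supseteq\ZZ^n$ under coordinatewise exponentiation, hence with $E^{-1}\ZZ^n/\ZZ^n$; the columns of $E^{-1}$ then give exactly the generators $\sigma_j$.

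Next I would compute the order. Using a Smith normal form $E=UDV$ with $U,V\in\mathrm{GL}_n(\ZZ)$ and $D=\mathrm{diag}(e_1,\dots,e_n)$ (so $\prod_k e_k=\vert\det E\vert$), one has $\Phi_E=\Phi_U\circ\Phi_D\circ\Phi_V$, and $\Phi_U$, $\Phi_V$ are automorphisms of $(\CC^\ast)^n$; therefore $\ker\Phi_E$ has the same cardinality as $\ker\Phi_D=\mu_{e_1}\times\cdots\times\mu_{e_n}$, where $\mu_e\subset\CC^\ast$ denotes the $e$-th roots of unity, namely $\prod_k e_k=\vert\det E\vert$. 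Equivalently, in the lattice picture, $\vert G_f\vert=[E^{-1}\ZZ^n:\ZZ^n]=[\ZZ^n:E\ZZ^n]=\vert\det E\vert$. Since $d_f=\det E$ is positive for an invertible polynomial (the canonical weights being taken positive), this yields $\vert G_f\vert=d_f$.

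I do not expect a genuine obstacle here; the argument is essentially a bookkeeping of lattice indices. The only points deserving a word of care are the linear independence of the monomials $\prod_j x_j^{E_{ij}}$, used to identify $G_f$ exactly rather than merely a subgroup of the stated one, and the sign of $\det E$, which one disposes of by recalling the definition of the canonical system of weights.
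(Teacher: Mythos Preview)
Your argument is correct and follows the same line as the paper: the paper records that $G_f$ is generated by the exponentials of the columns of $E^{-1}$, i.e.\ identifies $G_f$ with $E^{-1}\ZZ^n/\ZZ^n$, and then simply says ``this implies'' $\vert G_f\vert=d_f$ (with attribution to \cite{Kreuzer}). You make exactly this identification and supply the standard lattice--index/Smith normal form computation that the paper leaves implicit; there is no substantive difference in approach.
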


For an invertible polynomial of loop or chain type the group $G_f$ is a cyclic group of order $d_f$: \cite{Kreuzer}.
For the Thom-Sebastiani sum of polynomials the symmetry group $G_f$ is the direct sum of the corresponding groups for the summands. 

For a finite abelian group $G$, let $G^*=\mbox{Hom\,}(G,\CC^*)$ be its group of characters.
(As abelian groups $G$ and $G^*$ are isomorphic, but not in a canonical way.)

\begin{proposition} {\rm (\cite{BH2}).}
$G_{\widetilde{f}}\cong G_f^*$.
\end{proposition}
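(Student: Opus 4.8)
The plan is to translate everything into the integer matrix $E$ and then apply Pontryagin duality for finite abelian groups together with the ``linking form'' of $E$. First I would reformulate $G_f$ intrinsically: the defining conditions $\prod_{j}\lambda_j^{E_{ij}}=1$ ($i=1,\dots,n$) say exactly that $\lambda=(\lambda_1,\dots,\lambda_n)$ lies in the kernel of the group homomorphism $\widehat{E}\colon(\CC^*)^n\to(\CC^*)^n$, $\lambda\mapsto\bigl(\prod_{j}\lambda_j^{E_{ij}}\bigr)_{i}$. Under the standard identification $(\CC^*)^n=\mathrm{Hom}(\ZZ^n,\CC^*)$, and since $\mathrm{Hom}(-,\CC^*)$ is contravariant, $\widehat{E}$ becomes the map $\phi\mapsto\phi\circ E^{T}$ induced by $E^{T}\colon\ZZ^n\to\ZZ^n$ (the transpose is forced by the contravariance). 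As $\det E\ne 0$, the map $E^{T}$ is injective, so $0\to\ZZ^n\xrightarrow{E^{T}}\ZZ^n\to\mathrm{coker}(E^{T})\to 0$ is exact; because $\CC^*$ is divisible, hence an injective $\ZZ$-module, applying $\mathrm{Hom}(-,\CC^*)$ preserves exactness and gives $G_f=\ker\widehat{E}\cong\mathrm{Hom}(\mathrm{coker}(E^{T}),\CC^*)=(\mathrm{coker}(E^{T}))^{*}$. Since the exponent matrix of $\widetilde{f}$ is $E^{T}$, the same computation yields $G_{\widetilde{f}}\cong(\mathrm{coker}(E))^{*}$. (Both groups then have order $\vert\det E\vert=d_f$, consistently with Proposition~1.)

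The core step is to exhibit a canonical perfect pairing between $\mathrm{coker}(E)$ and $\mathrm{coker}(E^{T})$. I would define $\mathrm{coker}(E)\times\mathrm{coker}(E^{T})\to\QQ/\ZZ\hookrightarrow\CC^*$ by $(\bar{u},\bar{v})\mapsto\exp\!\bigl(2\pi i\,(E^{-1}u)^{T}v\bigr)$, where $E^{-1}$ is taken over $\QQ$. It is well defined, since replacing $u$ by $u+Ew$ or $v$ by $v+E^{T}w$ with $w\in\ZZ^n$ changes $(E^{-1}u)^{T}v$ by an integer; and it is non-degenerate: if $(E^{-1}u)^{T}v\in\ZZ$ for every $v\in\ZZ^n$ then $E^{-1}u\in\ZZ^n$, i.e.\ $u\in E\ZZ^n$, and symmetrically in the second variable (the same argument with $E$ replaced by $E^{T}$). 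Hence this pairing yields a canonical isomorphism $\mathrm{coker}(E)\cong(\mathrm{coker}(E^{T}))^{*}$. Combining this with the previous paragraph and the canonical biduality of finite abelian groups, $G_{\widetilde{f}}\cong(\mathrm{coker}(E))^{*}\cong\bigl((\mathrm{coker}(E^{T}))^{*}\bigr)^{*}\cong\mathrm{coker}(E^{T})$, while $G_f^{*}\cong\bigl((\mathrm{coker}(E^{T}))^{*}\bigr)^{*}\cong\mathrm{coker}(E^{T})$; therefore $G_{\widetilde{f}}\cong G_f^{*}$.

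A shorter but non-canonical alternative skips the linking form: by the Smith normal form $E=UDV$ with $U,V\in\mathrm{GL}_n(\ZZ)$ and $D$ diagonal, the transpose $E^{T}=V^{T}DU^{T}$ has the same invariant factors, so $\mathrm{coker}(E)\cong\mathrm{coker}(E^{T})$ and hence $G_f\cong G_{\widetilde{f}}$; since every finite abelian group is (non-canonically) isomorphic to its character group, $G_{\widetilde{f}}\cong G_f\cong G_f^{*}$. The main obstacle in either route is purely bookkeeping — getting the transpose conventions right in the first step, and checking that the pairing descends to the cokernels and is non-degenerate; everything else is formal. If one wishes to stay closer to the explicit generators $\sigma_j$ of $G_f$ coming from the columns of $E^{-1}$, the same pairing can be written directly on $G_f\times G_{\widetilde{f}}$ without ever mentioning cokernels, by feeding the ``logarithm'' $E^{-1}\alpha$ of an element of $G_f$ into the exponent vector of an element of $G_{\widetilde{f}}$.
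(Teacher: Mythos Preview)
Your proof is correct, and at its heart it uses the same perfect pairing as the paper: once your linking form on $\mathrm{coker}(E)\times\mathrm{coker}(E^{T})$ is pushed back through the identifications $G_f\cong(\mathrm{coker}(E^{T}))^{*}$ and $G_{\widetilde f}\cong(\mathrm{coker}(E))^{*}$, the induced pairing $G_{\widetilde f}\times G_f\to\CC^{*}$ is exactly the paper's $\langle\underline{\lambda},\underline{\mu}\rangle_E=\exp\bigl(2\pi i\,\alpha^{T}E\beta\bigr)$ (your $u$ corresponds to $E\beta$). The difference is purely one of packaging. The paper writes that bilinear form down directly on $G_{\widetilde f}\times G_f$, checks well-definedness from the conditions $E\beta\in\ZZ^{n}$ and $E^{T}\alpha\in\ZZ^{n}$, and verifies non-degeneracy by letting $\underline{\mu}$ range over all of $G_f$; you instead first reinterpret $G_f$ and $G_{\widetilde f}$ as character groups of cokernels via the injectivity of $\CC^{*}$, and then invoke the standard linking form. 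Your route explains \emph{why} such a pairing should exist (it is the canonical duality between $\ZZ^{n}/E\ZZ^{n}$ and $\ZZ^{n}/E^{T}\ZZ^{n}$), at the cost of the extra homological scaffolding; the paper's route is shorter and keeps the explicit formula visible, which is what gets used later in the proof of the main theorem. Your Smith-normal-form alternative is a genuinely different (non-canonical) argument that the paper does not give; it proves the bare isomorphism but would not supply the specific identification needed downstream.
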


One has the following identification of $G_{\widetilde{f}}$ and $G_f^*$. Let
\begin{eqnarray*}
\underline{\lambda} & = & (\exp(2\pi i\,\alpha_1), \ldots, \exp(2\pi i\,\alpha_n))\in G_{\widetilde{f}},\\ 
\underline{\mu} & = & (\exp(2\pi i\,\beta_1), \ldots, \exp(2\pi i\,\beta_n))\in G_f.
\end{eqnarray*}
Define $\langle\underline{\lambda}, \underline{\mu}\rangle_E$ as 
$\langle\underline{\lambda}, \underline{\mu}\rangle_E=\exp(2\pi i\,(\underline{\alpha},\underline{\beta})_E)$,
where
$$(\underline{\alpha},\underline{\beta})_E:=(\alpha_1, \ldots, \alpha_n)E(\beta_1, \ldots, \beta_n)^T.$$
An element $\underline{\mu} =  (\exp(2\pi i\,\beta_1), \ldots, \exp(2\pi i\,\beta_n))$ belongs to $G_f$ if and only if $E(\beta_1, \ldots, \beta_n)^T$ is an integral column vector. This implies that adding integers to $\alpha_1, \ldots, \alpha_n$ does not change $\langle\underline{\lambda}, \underline{\mu}\rangle_E$. The same argument shows that the ambiguity in the choice of $\beta_1, \ldots, \beta_n$ does not influence the value $\langle\underline{\lambda}, \underline{\mu}\rangle_E$. Therefore the pairing $\langle  -, - \rangle_E$ is well defined.
The pairing $\langle  -, - \rangle_E$ associates to an element of $G_{\widetilde{f}}$ a homomorphism $G_f \to \CC^*$. 
Let  $\underline{\lambda} \in G_{\widetilde{f}}$ be such that $\langle\underline{\lambda}, \underline{\mu}\rangle_E=1$ for all $\underline{\mu} \in G_f$. This means that $(\alpha_1, \ldots, \alpha_n)(k_1, \ldots, k_n)^T$ is an integer for arbitrary integers $k_1, \ldots , k_n$ and therefore $\underline{\lambda}=1$. Thus the pairing  defines an isomorphism
between $G_{\widetilde{f}}$ and $G_f^*$. This permits to identify these groups.

The following definition was given in \cite{BH2}.

\begin{definition} (\cite{BH2})
For a subgroup $H\subset G$ its {\em dual} (with respect to $G$) $\widetilde{H}\subset G^*$ is the kernel of the natural map $i^*:G^*\to H^*$ induced by the inclusion $i:H\hookrightarrow G$.
\end{definition}
In \cite[Proposition 3]{ET2}, it was shown that, for $G$ being the symmetry group $G_f$ of a non-degenerate invertible polynomial $f$, this definition coincides with the one from \cite{Krawitz}. 
One can see that the dual to $\widetilde{H}$ (with respect to $G^*$) coincides with $H$,
the dual to $G$ (as a subgroup of $G$ itself) is the trivial subgroup $\langle e\rangle\subset G^*$,
the dual to $\langle e\rangle\subset G$ is the group $G^*$.

%%%%%%%%%%%%%%%%%%%%%%%%%%%%%%%%%%%%%%%%%%%%%%
\section{Equivariant monodromy zeta function}
%%%%%%%%%%%%%%%%%%%%%%%%%%%%%%%%%%%%%%%%%%%%%%
Let $G$ be a finite group. A $G$-set is a set with an action of the group $G$. A $G$-set is {\em irreducible} if the action
of $G$ on it is transitive. Isomorphism classes of irreducible $G$-sets are in one-to-one correspondence with
conjugacy classes of subgroups of $G$: to the conjugacy class containing a subgroup $H\subset G$ one associates the isomorphism class $[G/H]$
of the $G$-set $G/H$. The {\em Grothendieck ring} $\Kring{G}$ {\em of finite $G$-sets}
(also called the {\em Burnside ring} of $G$: see, e.g.,
\cite{Knutson}) is the (abelian) group generated by the isomorphism classes of finite $G$-sets
modulo the relation $[A\amalg B]=[A]+[B]$ for finite $G$-sets $A$ and $B$. The multiplication
in $\Kring{G}$ is defined by the 
%% Cartesian
cartesian product. As an abelian group, $\Kring{G}$
is freely generated by the isomorphism classes of irreducible $G$-sets. The element $1$ in
the ring $\Kring{G}$ is represented by the $G$-set consisting of one point (with the trivial $G$-action).

There is a natural homomorphism from the Burnside ring $\Kring{G}$ to the ring $R(G)$ of representations of the group $G$ which sends a $G$-set $X$ to the (vector) space of functions on $X$. If $G$ is cyclic, then this homomorphism is injective. In general, it is neither injective nor surjective.

For a subgroup $H\subset G$ there are natural maps
$\mbox{Res}_{H}^{G}: \Kring{G}\to \Kring{H}$ and $\mbox{Ind}_{H}^{G}: \Kring{H}\to \Kring{G}$.
The {\em restriction map} $\mbox{Res}_{H}^{G}$ sends a $G$-set X to the same set considered with the $H$-action.
The {\em induction map} $\mbox{Ind}_{H}^{G}$ sends an $H$-set $X$ to the product $G\times X$ factorized
by the natural equivalence: $(g_1, x_1)\equiv (g_2, x_2)$ if there exists $g\in H$ such that
$g_2=g_1g$, $x_2=g^{-1}x_1$ with the natural (left) $G$-action. Both maps are group homomorphisms, however the induction map $\mbox{Ind}_{H}^{G}$ is not a ring homomorphism. 

For an action of a group $G$ on a set $X$ and for a point $x\in X$, let $G_x=\{g\in G: gx=x\}$ be the isotropy group of the point $x$. For a subgroup $H\subset G$ let $X^{(H)}=\{x\in X: G_x=H\}$ be the set of points with the isotropy group $H$.

The Saito duality is applied to the monodromy zeta functions of quasihomogeneous (hypersurface) singularities and thus we
shall restrict our discussion to this situation as well.

Let $f(x_1, \ldots, x_n)$ be a quasihomogeneous polynomial in $n$ variables with reduced weights $\overline{w}_1$, \dots, 
$\overline{w}_n$ and quasidegree $\overline{d}$.
The monodromy transformation of $f$ can be defined as the element $h=h_f\in G_f$ given by
$$
h=\left(\exp(2\pi i \,\overline{w}_1/\overline{d}), \ldots,
\exp(2\pi i \,\overline{w}_n/\overline{d})\right)\,.
$$
As a map from the Milnor fibre $V_f=f^{-1}(1)$ of $f$ to itself,
$h$ defines an action (a faithful one) of the cyclic group $G=\ZZ_{\overline{d}}$ of order $\overline{d}$
on $V_f$. Let
\begin{equation} \label{Defzeta}
\zeta_f(t)=\prod\limits_{q\ge 0} \left(\det(\mbox{id}-t\cdot h_{*}\mbox{\raisebox{-0.5ex}{$\vert$}}{}_{H_q(V_f)})\right)^{(-1)^q}
\end{equation}
be the (classical) monodromy zeta function of $f$ (that is the zeta function of the transformation $h$). One can show that in the described situation one has
$$
\zeta_f(t)=\prod\limits_{m\vert \overline{d}}(1-t^m)^{s_m},
$$
where $s_m=\chi(V_f^{(\ZZ_{\overline{d}/m})})/m$ are integers. If in (\ref{Defzeta}) one considers the action of $h_{*}$ on the reduced homology groups of $V_f$, one obtains the {\em reduced monodromy zeta function} $\widetilde{\zeta}_f(t) = \zeta_f(t)/(1-t)$.

Let a finite $\ZZ_{\overline{d}}\,$-set set $X$ represent an element $a\in \Kring{\ZZ_{\overline{d}}}$.
One can consider $X$ as a (discrete) topological space with a transformation $h$ of order
$\overline{d}$ ($h$ is a generator of the group $\ZZ_{\overline{d}}$). Let $\zeta_a(t)$ be the zeta function
of the transformation $h:X\to X$. The correspondence $a\mapsto \zeta_a(t)$ (being appropriately extended: $\zeta_{a-b}(t):=\zeta_a(t)/\zeta_b(t)$)
defines a map from the Burnside ring $\Kring{\ZZ_{\overline{d}}}$ to the set of functions of
the form~(\ref{phi}). One can easily see that this is a one-to-one correspondence. A function
$\phi$ of the form~(\ref{phi}) corresponds to the element
$\sum\limits_{m\vert \overline{d}}s_m[\ZZ_{\overline{d}}/\ZZ_{\overline{d}/m}]$.
Thus the zeta function of a transformation of order $\overline{d}$ (of a ``good'' topological
space, not necessarily of a finite one, say, of the monodromy transformation $h_f$ above)
can be regarded as an element of the Burnside ring $\Kring{\ZZ_{\overline{d}}}$.

Now let $G$ be a subgroup of the symmetry group $G_f$ of $f$ containing the monodromy transformation $h$.
The description above inspires the following definition.

\begin{definition}
The {\em $G$-equivariant zeta function} of $f$ is the element
\begin{equation}\label{G-zeta}
\zeta_f^G=\sum_{H\subset G} \chi(V_f^{(H)}/G)[G/H]
\end{equation}
of the Burnside ring $\Kring{G}$. 
\end{definition}

The coefficient $\chi(V_f^{(H)}/G)$ is the Euler characteristic of the space (a manifold) of orbits of type $G/H$ in $V_f$.

\begin{definition}
The {\em reduced $G$-equivariant zeta function} of $f$ is $\widetilde{\zeta}_f^G = \zeta_f^G-1$.
\end{definition}

\begin{remarks}
{\bf 1.} For a group of symmetries of a function $f$ ($f$ is not necessarily quasihomogeneous
and $G$ is not necessarily abelian or containing $h$) the element defined by (\ref{G-zeta})
can be regarded as an equivariant Euler characteristic of the Milnor fibre $V_f$.
(This definition was already used in, e.g., \cite{Luck}, \cite{GLM}.)
In particular, under the natural map from the Burnside ring $\Kring{G}$ to the ring of representations $R(G)$, it maps to the equivariant
Euler characteristic of the Milnor fibre 
%% considered in
in the sense of \cite{Wall}. In the situation when the group $G$ contains the monodromy transformation $h$, this element can be regarded as an equivariant version of the monodromy zeta function as well. In particular it determines the classical zeta function $\zeta_f(t)$ of $f$: see Remark~2 below. For a non-degenerate $f$, the analogue of the reduced $G$-equivariant zeta function can be regarded as a $G$-equivariant Milnor number.

{\bf 2.} For a subgroup $H\subset G$ ($G$ is a group of symmetries of $f$) containing
the monodromy transformation $h$, the
$H$-equivariant zeta function of $f$ is the restriction of its $G$-equivariant zeta
function: $\zeta_f^{H}= \mbox{Res}_{H}^{G} \zeta_f^{G}$. In particular, the $G$-equivariant zeta function of $f$ determines the $\langle h\rangle$-equivariant zeta function corresponding to the (cyclic) group generated by the monodromy transformation $h$ of $f$ and therefore the classical zeta function $\zeta_f(t)$ of $f$.
\end{remarks}

%%%%%%%%%%%%%%%%%%%%%%%%%%%%%%%%%%%%
\section{Equivariant Saito duality}
%%%%%%%%%%%%%%%%%%%%%%%%%%%%%%%%%%%%
The Saito duality is a duality between rational functions of the form (\ref{phi}). As described above, these functions are in one-to-one correspondence with the elements of the Burnside ring
$\Kring{\ZZ_d}$. Let us describe the (classical) Saito duality in terms of the Burnside ring.

A finite $\ZZ_d\,$-set is the union of $\ZZ_d\,$-orbits of the form $\ZZ_d/\ZZ_{d/m}$
(consisting of $m$ points). One can see that the Saito duality is induced by the map
from $\Kring{\ZZ_d}$ into itself which substitutes each orbit
(an irreducible $\ZZ_d\,$-set) consisting of $m$ points by an orbit consisting of $d/m$ points.
An orbit consisting of $d/m$ points can be regarded as the cyclic group $\ZZ_{d/m}$.
However it should not be identified with the isotropy subgroup of the $\ZZ_d\,$-action
on the initial orbit (also isomorphic to $\ZZ_{d/m}$): there is no natural (non-trivial) action
of the group $\ZZ_d$ on its subgroup $\ZZ_{d/m}$. Instead of that one can regard the
(classical) Saito duality as an isomorphism between (the abelian groups)
$\Kring{\ZZ_d}$ and $\Kring{\ZZ_d^*}$ where
$\ZZ_d^*=\mbox{Hom\,}(\ZZ_d,\CC^*)$ is the group of characters of $\ZZ_d$ ($\ZZ_d^*$ is isomorphic
to $\ZZ_d$, but this isomorphism is not canonical). 
An element $a\in \Kring{\ZZ_d}$ can be written as
$$
\sum\limits_{H\subset \ZZ_d} s_H[\ZZ_d/H]\,.
$$
The classical Saito duality associates to $a$ the element
$$
\widehat{a}=\sum\limits_{H\subset \ZZ_d} s_H[\ZZ_d^*/\widetilde{H}]
$$
of the Burnside ring $\Kring{\ZZ_d^*}$, where $\widetilde{H}$ is the dual subgroup of $\ZZ_d^*$.

This inspires the following definition.

\begin{definition}
Let $G$ be a finite abelian group. The {\em equivariant Saito duality} corresponding to the group $G$
(or {\em $G$-Saito duality}) is the group homomorphism
$D_G:\Kring{G}\to \Kring{G^*}$ sending 
an element $a=\sum\limits_{H\subset G} s_H[G/H]$ to the element 
$\widehat{a}=D_Ga=\sum\limits_{H\subset G} s_H[G^*/\widetilde{H}]$, where $\widetilde{H}$ is
the dual to $H$ with respect to $G$.
\end{definition}

One can easily see that $D_G$ is an isomorphism of abelian groups.

\begin{remark}
One can regard the correspondence $a\mapsto \widehat{a}$ as a Fourier transformation from
$\Kring{G}$ to $\Kring{G^*}$.
The understanding of the Saito duality as a duality between objects corresponding to
a group $G$ and objects corresponding to the group $G^*$ is consistent with the idea
that a duality between orbifold Landau-Ginzburg models includes substitution of a
group by the group of its characters: see, e.g., \cite{BH2}, \cite{ET2}.
Let us recall that the symmetry group $G_{\widetilde{f}}$ of the Berglund-H\"ubsch transpose $\widetilde{f}$
of an invertible polynomial $f$ is isomorphic to the group of characters of $G_f$.
\end{remark} 

%%%%%%%%%%%%%%%%%%%%%%%%%%%%%%%%%%%%%%%%%%%%%%%%%%%%%%%%%%%%%%%%%%%%%%%%%%%%%%
\section{Equivariant monodromy zeta functions of dual invertible polynomials}
%%%%%%%%%%%%%%%%%%%%%%%%%%%%%%%%%%%%%%%%%%%%%%%%%%%%%%%%%%%%%%%%%%%%%%%%%%%%%%
Let $f(x_1, \ldots, x_n)=\sum\limits_{i=1}^n \prod\limits_{j=1}^n x_j^{E_{ij}}$ be an
invertible polynomial, let $\widetilde{f}$ be the Berglund-H\"ubsch transpose of it, and let $G=G_f$ and
$G^*=G_{\widetilde{f}}$ be their symmetry groups. 

\begin{theorem}\label{theo}
The reduced equivariant zeta functions $\widetilde{\zeta}_f^{\,G}$ and $\widetilde{\zeta}_{\widetilde{f}}^{G^*}$
of the polynomials $f$ and $\widetilde{f}$ respectively are $($up to the sign $(-1)^n$$)$ Saito dual to each other:
$$
\widetilde{\zeta}_{\widetilde{f}}^{G^*} = 
(-1)^n D_{G}\widetilde{\zeta}_f^{G}\,.
$$ 
\end{theorem}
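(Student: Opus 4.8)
The plan is to reduce the theorem to the Thom--Sebastiani case and then verify it directly for the two building blocks (loops and chains). The key structural facts I would use are: (i) both the symmetry group and the Berglund--H\"ubsch transpose behave additively under Thom--Sebastiani sums, so that if $f=f'\oplus f''$ in disjoint sets of variables then $G_f=G_{f'}\times G_{f''}$ and $\widetilde f=\widetilde{f'}\oplus\widetilde{f''}$; (ii) the $G$-equivariant zeta function should behave multiplicatively under Thom--Sebastiani sums in the appropriate sense, i.e.\ there is an external product $\Kring{G'}\times\Kring{G''}\to\Kring{G'\times G''}$ and a join-type formula for the Milnor fibre $V_{f'\oplus f''}\simeq V_{f'}\ast V_{f''}$ compatible with the product of groups; (iii) the equivariant Saito duality $D_G$ is itself multiplicative: $D_{G'\times G''}=D_{G'}\otimes D_{G''}$, because dualizing a product subgroup of $G'\times G''$ that splits as a product is done factorwise, and the pairing $\langle-,-\rangle_E$ is block-diagonal when $E$ is block-diagonal. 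First I would set up these three compatibilities carefully — this is where the real bookkeeping lies, since $\chi$ of a join is $\chi(A)+\chi(B)-\chi(A)\chi(B)$, which matches the way reduced zeta functions multiply, and the sign $(-1)^n$ is additive in $n$ across the summands, so everything is consistent with an induction on the number of Thom--Sebastiani blocks.

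Granting the reduction, the heart of the argument is the verification for a single loop-type or chain-type polynomial. For these, $G_f$ is cyclic of order $d_f$, and I would explicitly compute the strata $V_f^{(H)}$ for each subgroup $H\subset G_f$ and their Euler characteristics. Concretely, for each divisor arrangement one identifies which coordinate subspaces of $\CC^n$ the stratum lives in (points of $V_f$ where a prescribed set of coordinates vanishes), computes the isotropy group of a generic such point, and evaluates the Euler characteristic of the resulting orbit space. The computation is essentially the one already implicit in the formula $s_m=\chi(V_f^{(\ZZ_{\overline d/m})})/m$ from Section~3, refined to keep track of the full subgroup rather than just its order. Running the same computation for $\widetilde f$ and matching strata under the duality $H\mapsto\widetilde H$ — using the explicit pairing $\langle-,-\rangle_E$ to identify $G_{\widetilde f}$ with $G_f^*$ and hence identify the dual subgroups — should give the claimed identity $\widetilde\zeta_{\widetilde f}^{\,G^*}=(-1)^nD_G\widetilde\zeta_f^G$ term by term in the basis $\{[G^*/\widetilde H]\}$.

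The main obstacle I anticipate is precisely step (ii): establishing that the $G$-equivariant zeta function is multiplicative under Thom--Sebastiani sums at the level of Burnside rings, rather than merely after passing to representation rings. The join $V_{f'}\ast V_{f''}$ carries the diagonal action of $G_{f'}\times G_{f''}$, and one must show that the stratification by isotropy type of the join refines compatibly into products of strata of the factors, with Euler characteristics multiplying in the join-corrected way; the open "cone" parts of the join (where one factor coordinate is at the cone point) need separate care and are what produce the $-1$ and the reduced-versus-unreduced bookkeeping. A clean way to organize this is to first prove a purely Burnside-theoretic lemma: for finite abelian $G',G''$ there is a natural "join product" operation on Burnside rings under which $\zeta^{G'\times G''}_{f'\oplus f''}$ is the product of $\zeta^{G'}_{f'}$ and $\zeta^{G''}_{f''}$, and $D_{G'\times G''}$ intertwines it with the corresponding product on the character side. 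Once that lemma is in hand, the induction closes and only the two base cases (loop and chain) require hands-on computation.
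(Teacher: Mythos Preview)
Your strategy is genuinely different from the paper's, and with the bookkeeping you outline it can be made to work --- but only for \emph{non-degenerate} invertible polynomials, since the Kreuzer--Skarke decomposition into loops and chains is available only in that case. The theorem as stated (and as proved in the paper) covers arbitrary invertible $f$, so your reduction leaves a gap.

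The paper bypasses Thom--Sebastiani entirely. It stratifies $\CC^n$ by coordinate tori $(\CC^*)^I$, $I\subset\{1,\dots,n\}$, and observes that the isotropy subgroup $G_f^I$ is constant on each stratum, so
\[
\zeta_f^G=\sum_{I}\chi\bigl((V_f\cap(\CC^*)^I)/G\bigr)\,[G/G_f^I].
\]
By Varchenko's formula the summand is nonzero exactly when $|\mathrm{supp}\,f\cap\ZZ^I|=|I|$, and then the orbit Euler characteristic is simply $(-1)^{|I|-1}$. The single key lemma is that for such $I$ the isotropy subgroup of the dual polynomial on the complementary torus satisfies $G_{\widetilde f}^{\,\overline I}=\widetilde{G_f^I}$. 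This yields an explicit bijection $I\leftrightarrow\overline I$ between the nonzero summands of $\zeta_f^G$ and those of $\zeta_{\widetilde f}^{G^*}$, matching $[G/G_f^I]$ with $[G^*/\widetilde{G_f^I}]$ and $(-1)^{|I|-1}$ with $(-1)^{n-|I|-1}$; the extreme terms $I=I_0$ and $I=\emptyset$ account for the $-1$ in the reduced zeta functions. No case analysis on loop versus chain, and no multiplicativity lemma, is needed.

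What your route would buy is a modular structure and explicit formulas in the atomic cases; what it costs is the Thom--Sebastiani/Burnside multiplicativity lemma you correctly flag as the main obstacle (the isotropy groups that actually occur are product subgroups, so the lemma is true on the relevant sub-span of the Burnside ring, but you must say this), plus two nontrivial base computations, plus the loss of the degenerate case. If you tried to extend your argument to degenerate $f$ you would effectively be forced into a torus-by-torus analysis anyway, at which point the paper's direct argument is shorter.
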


\begin{proof}
For a subset $I\subset I_0=\{1,2,\ldots, n\}$, let
$\CC^I:= \{(x_1, \ldots, x_n)\in \CC^n: x_i=0 \mbox{ for }i\notin I\}$,
$(\CC^*)^I:= \{(x_1, \ldots, x_n)\in \CC^n: x_i\ne 0 \mbox{ for }i\in I, x_i=0 \mbox{ for }i\notin I\}$.
One has $\CC^n=\coprod\limits_{I\subset I_0}(\CC^*)^I$, $V_f=\coprod\limits_{I\subset I_0}V_f\cap(\CC^*)^I$. Let $G_f^I\subset G$ and $G_{\widetilde{f}}^I\subset G^*$ be the isotropy subgroups of the actions of $G$ and $G^*$ on the torus $(\CC^*)^I$ respectively. (These isotropy subgroups are the same for all points of $(\CC^*)^I$.)

Let $\ZZ^n$ be the lattice of monomials in the variables $x_1$, \dots, $x_n$ ($(k_1, \ldots, k_n)\in \ZZ^n$ corresponds to the monomial $x_1^{k_1}\cdots x_n^{k_n}$)
and let $\ZZ^I:=\{(k_1, \ldots, k_n)\in \ZZ^n: k_i=0 \mbox{ for }i\notin I\}$.
For a polynomial $g$ in the variables $x_1$, \dots, $x_n$, let $\mbox{supp\,} g\subset \ZZ^n$ be the set of monomials (with non-zero coefficients) in $g$.

One has
\begin{equation}\label{zetas}
\zeta_f^G=\sum\limits_{I\subset I_0}\zeta_f^{G,I} \ \  \mbox{where} \ \ 
\zeta_f^{G,I}:=\chi((V_f\cap (\CC^*)^I)/G)[G/G_f^I]\,,\quad \widetilde{\zeta}_f^G=\zeta_f^G-1\,.
\end{equation}
(Note that $\zeta_f^{G,\emptyset} =0$.) Here $\chi((V_f\cap (\CC^*)^I)/G)=\chi(V_f\cap (\CC^*)^I)\vert G_f^I\vert/\vert G\vert$. 
One can consider only the summands $\zeta_f^{G,I}$ with the coefficient $\chi((V_f\cap (\CC^*)^I)/G)$ different from zero.
The coefficient $\chi((V_f\cap (\CC^*)^I)/G)$
is different from $0$ if and only if $\chi(V_f\cap (\CC^*)^I)\ne 0$. From the Varchenko formula
\cite{Varch}, it follows that the latter Euler characteristic is different from zero
if and only if $\mbox{supp\,} f\cap \ZZ^I$ consists of $\vert I\vert$ points. 

Let $\vert\mbox{supp\,} f \cap \ZZ^I\vert=\vert I\vert =:k$. Renumbering the coordinates $x_i$ and 
the monomials in $f$ permits to assume that $I=\{ 1, \ldots , k\}$ and the matrix $E$ is of the form
$$E = \left( \begin{array}{cc} E_I & 0 \\ \ast & E_{\overline{I}} \end{array} \right),
$$ 
where $E_I$ and $E_{\overline{I}}$ are square matrices of sizes $k \times k$ and $(n-k) \times (n-k)$ respectively. One has $\vert\mbox{supp\,} \widetilde{f} \cap \ZZ^{\overline{I}}\vert=\vert {\overline{I}}\vert=n-k$.

\begin{lemma} Under the above assumptions, one has 
$$ G_{\widetilde{f}}^{\overline{I}} = \widetilde{G_f^I}.$$
\end{lemma}

\begin{sloppypar}

\begin{proof} Let $g$ and $\widetilde{g}$ be the restrictions of the polynomials $f$ and $\widetilde{f}$  to $\CC^I$ and to $\CC^{\overline{I}}$ respectively. There are the following two exact sequences
\begin{eqnarray*}
& & 0\longrightarrow G_f^I \longrightarrow G_f \longrightarrow G_g\, , \\
& & 0\longrightarrow G_{\widetilde{f}}^{\overline{I}} \longrightarrow G_{\widetilde{f}} \longrightarrow G_{\widetilde{g}}\, .
\end{eqnarray*}
One has $|G_f|=|G_{\widetilde{f}}|= d = \det E$, $|G_g|=\det E_I$, $|G_{\widetilde{g}}|= \det E_{\overline{I}}$, and therefore $|G_g||G_{\widetilde{g}}|=d$. This implies that 
\begin{equation} \label{order}
|G_f^I| |G_{\widetilde{f}}^{\overline{I}}| \geq d.
\end{equation}
(Moreover, $|G_f^I| |G_{\widetilde{f}}^{\overline{I}}| = d$ if and only if the homomorphisms $G_f \to G_g$ and $G_{\widetilde{f}} \to G_{\widetilde{g}}$ are surjective.)

Elements $\underline{\lambda} = (\exp(2\pi i\,\alpha_1), \ldots, \exp(2\pi i\,\alpha_n)) \in G_{\widetilde{f}}$ and $\underline{\mu} = (\exp(2\pi i\,\beta_1), \ldots, \exp(2\pi i\,\beta_n)) \in G_f$ belong to $G_{\widetilde{f}}^{\overline{I}}$ and to $G_f^I$ if and only if $\alpha_{k+1} \equiv \ldots \equiv \alpha_n \equiv 0 \, \mbox{mod}\, 1$ and $\beta_1 \equiv \ldots \equiv \beta_k \equiv 0 \, \mbox{mod}\, 1$ respectively. Therefore
$$(\underline{\alpha},\underline{\beta})_E=(\alpha_1,  \ldots , \alpha_k  , 0 , \ldots , 0)\left( \begin{array}{cc} E_I & 0 \\ \ast & E_{\overline{I}} \end{array} \right) \left( \begin{array}{c} 0 \\ \vdots \\ 0 \\ \beta_{k+1} \\ \vdots \\ \beta_n \end{array} \right) =0
$$
and $\langle\underline{\lambda}, \underline{\mu}\rangle_E=1$. Thus $G_{\widetilde{f}}^{\overline{I}} \subset \widetilde{G_f^I}$ and therefore 
\begin{equation} \label{order2}
|G_f^I| |G_{\widetilde{f}}^{\overline{I}}| \leq d.
\end{equation}
The inequalities (\ref{order})  and (\ref{order2}) imply that $|G_f^I| |G_{\widetilde{f}}^{\overline{I}}|=d$, $|G_f^I | = \det E_{\overline{I}}$, $G_{\widetilde{f}}^{\overline{I}}= \det E_I$, and $G_{\widetilde{f}}^{\overline{I}} = \widetilde{G_f^I}$.
\end{proof}

\end{sloppypar}

Due to the Varchenko formula, the Euler characteristic of the intersection $V_f\cap (\CC^*)^I$ of
the Milnor fibre $V_f$ with the torus $(\CC^*)^I$ is equal to $(-1)^{k-1} \det E_I = (-1)^{k-1} |G_f|/|G_f^I|$. Therefore 
$$
\chi((V_f\cap (\CC^*)^I)/G)=(-1)^{\vert I\vert-1}\,.
$$
In the same way
$$
\chi((V_{\widetilde{f}}\cap (\CC^*)^{\overline{I}})/G^*)=(-1)^{n-\vert I\vert-1}\,.
$$

This establishes the equivariant Saito duality (up to the sign $(-1)^n$)
between all the summands in the equation (\ref{zetas}) for ${\zeta}_f^G$ and
${\zeta}_{\widetilde{f}}^{G^*}$
corresponding to proper subsets $I\subsetneq I_0$. One can see that the summand in ${\zeta}_f^G$
corresponding to $I=I_0$ is dual (up to the sign $(-1)^{n}$)
to the summand $-1$ in the reduced zeta function $\widetilde{\zeta}_{\widetilde{f}}^{G^*}$.
This implies the statement. 
\end{proof}

%%%%%%%%%%%%%%%%%%%%%%%%%%%%%%%%%%%%%%%%%%%%%%%%%%%%%%%%%%%%%%%%%%%%%%%
\section{Geometric roots of the monodromy and the equivariant duality}
%%%%%%%%%%%%%%%%%%%%%%%%%%%%%%%%%%%%%%%%%%%%%%%%%%%%%%%%%%%%%%%%%%%%%%%
In \cite{MMJ} there were defined geometric roots of the monodromy transformation.
For an invertible polynomial $f$ a {\em geometric root} of degree
$c_f=\gcd(w_1, \ldots, w_n)$ ($w_i$ are the canonical weights of $f$) of the monodromy
transformation $h_f$ is an element $\sqrt{h}=\sqrt{h}_f$
of the symmetry group $G_f$ such that $({\sqrt{h}})^{c_f}=h_f$.
The order of the monodromy transformation $h_f$ of $f$ is equal to the
reduced degree $\overline{d}=d/c_f$ of the polynomial $f$. This implies the following statement.

\begin{proposition}
Geometric roots of degree $c_f$ of the monodromy transformation $h_f$ exist if and only if
the symmetry group $G_f$ of $f$ is cyclic.
\end{proposition}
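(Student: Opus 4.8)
The plan is to reduce the statement to elementary arithmetic in the finite abelian group $G_f$, using the two facts recalled above: $|G_f|=d$ (Proposition~1) and $\mathrm{ord}(h_f)=\overline{d}=d/c_f$; equivalently, the cyclic subgroup $\langle h_f\rangle$ has index exactly $c_f$ in $G_f$. The one ingredient beyond the excerpt that I will use is that, following \cite{MMJ}, a geometric root $\sqrt{h}$ of degree $c_f$ of $h_f$ is in particular an element of $G_f$ of order $d$ (equivalently, $\mathrm{ord}(\sqrt{h})=c_f\cdot\mathrm{ord}(h_f)$) --- this is the geometric content of the notion, and it is what makes the statement nontrivial, since the bare relation $(\sqrt{h})^{c_f}=h_f$ only forces $\overline{d}\mid\mathrm{ord}(\sqrt{h})\mid d$, not equality.

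Granting this, the ``only if'' direction is immediate: if a geometric root $\sqrt{h}$ exists, then $\langle\sqrt{h}\rangle$ is a cyclic subgroup of $G_f$ of order $d=|G_f|$, so $G_f=\langle\sqrt{h}\rangle$ is cyclic. For the ``if'' direction, assume $G_f=\langle\gamma\rangle\cong\ZZ_d$. Since $c_f\mid d$, the endomorphism $x\mapsto x^{c_f}$ of $G_f$ has image the unique subgroup of order $d/c_f=\overline{d}$, which equals $\langle h_f\rangle$ because $h_f$ has order $\overline{d}$; hence the $c_f$-th roots of $h_f$ form a nonempty coset of the kernel. Writing $h_f=\gamma^{c_f j}$ with $\gcd(j,\overline{d})=1$, this coset is $\{\gamma^m:\ m\equiv j\ (\mathrm{mod}\ \overline{d})\}$, and it remains to find in it a generator of $G_f$, i.e.\ an integer $m$ with $m\equiv j\ (\mathrm{mod}\ \overline{d})$ and $\gcd(m,d)=1$. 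Coprimality with $\overline{d}$ is automatic; for a prime $p\mid c_f$ dividing $\overline{d}$ one has $p\nmid j$, hence $p\nmid m$, for free, whereas for $p\mid c_f$ not dividing $\overline{d}$ the values $j+t\overline{d}$ run through all residues modulo $p$, so a single application of the Chinese remainder theorem produces such an $m$. Then $\sqrt{h}:=\gamma^m$ has order $d$ and satisfies $(\sqrt{h})^{c_f}=h_f$, as required.

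The only genuine obstacle is the point stressed in the first paragraph: one must invoke the full \cite{MMJ} definition of a geometric root, which builds in $\mathrm{ord}(\sqrt{h})=d$. With that, the ``only if'' half is a one-line consequence of $|G_f|=d$, and the ``if'' half is the routine Chinese-remainder computation above. Dropping the order condition would make the statement false --- for instance $f=x_1^2x_2+x_2^3+x_3^2x_4+x_4^5$ has $G_f\cong\ZZ_6\oplus\ZZ_{10}$, which is not cyclic, yet $h_f$ still admits a $c_f$-th root in $G_f$ --- so correctly reading the definition is the conceptual crux.
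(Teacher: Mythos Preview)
Your proof is correct and considerably more careful than the paper's own justification. The paper offers essentially one sentence: it records that $\mathrm{ord}(h_f)=\overline{d}=d/c_f$ and then declares that ``this implies the following statement''. No further argument is given, and in particular the paper never addresses the point you isolate as the crux --- whether an element $\sqrt{h}\in G_f$ satisfying $(\sqrt{h})^{c_f}=h_f$ is automatically of order $d$.

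You are right that, under the definition as literally written in this paper (only the relation $(\sqrt{h})^{c_f}=h_f$), the ``only if'' direction fails; your example $f=x_1^2x_2+x_2^3+x_3^2x_4+x_4^5$ indeed has $G_f\cong\ZZ_6\oplus\ZZ_{10}$ (not cyclic), $d=60$, $c_f=4$, $\overline{d}=15$, and $h_f$ does lie in the image of the $4$th-power map on $G_f$. Building the requirement $\mathrm{ord}(\sqrt{h})=d$ into the definition restores both directions, and your Chinese-remainder argument for the ``if'' direction (producing a $c_f$-th root that actually generates $G_f$) is complete. That the paper intends this stronger reading is also suggested by the sentence immediately following the proposition (``each of them is a generator of the symmetry group $G_f\cong\ZZ_d$''), which is likewise false for the bare root condition. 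So your write-up both supplies the argument the paper omits and pinpoints a genuine imprecision in the stated definition.
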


If geometric roots of degree $c_f$ of the monodromy transformation $h_f$ exist,
then each of them is a generator of the symmetry group $G_f\cong\ZZ_d$.
In this case the symmetry group $G_{\widetilde{f}}\cong G_f^*$ of the Berglund--H\"ubsch transpose $\widetilde{f}$ of $f$ is also a cyclic group of order $d$.
The monodromy transformation $h_{\widetilde{f}}$ is an element of order $\overline{d}_{\widetilde{f}}=d/c_{\widetilde{f}}$ in it and
therefore it has a geometric root of order $c_{\widetilde{f}}$. Together with the fact that the equivariant Saito duality for the group $\ZZ_d$
differs from the classical one only by the sign $(-1)$, Theorem \ref{theo} implies the following statement.

\begin{corollary} 
If geometric roots $\sqrt{h}_f$ of degree $c_f$ of the monodromy transformation $h_f$ exist, then geometric roots
$\sqrt{h}_{\widetilde{f}}$ of degree $c_{\widetilde{f}}$ of the monodromy transformation $h_{\widetilde{f}}$ also exist and one has
$$
\widetilde{\zeta}_{\sqrt{h}_{\widetilde{f}}}(t)=
\left(\widetilde{\zeta}_{\sqrt{h}_f}^*(t)\right)^{(-1)^{n-1}}
$$
with respect to the quasidegree $d=\det E$.
\end{corollary}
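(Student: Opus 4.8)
The plan is to derive the Corollary from Theorem~\ref{theo} by restricting the equivariant Saito duality to the cyclic groups $G_f$ and $G_{\widetilde{f}}$ and translating the resulting identity of Burnside-ring elements into an identity of rational functions of the form~(\ref{phi}).

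First I would settle the existence claim. By the above Proposition, the hypothesis that geometric roots of degree $c_f$ of $h_f$ exist means that $G_f$ is cyclic; then $G_{\widetilde{f}}\cong G_f^*$ is cyclic as well, and, as explained just before the Corollary, the monodromy transformation $h_{\widetilde{f}}$, an element of order $\overline{d}_{\widetilde{f}}=d/c_{\widetilde{f}}$ of the cyclic group $G_{\widetilde{f}}$ of order $d=\det E$, has a $c_{\widetilde{f}}$-th root $\sqrt{h}_{\widetilde{f}}$, which is a generator of $G_{\widetilde{f}}$. The heart of the matter is then the following dictionary. Fix the isomorphism $G_f\cong\ZZ_d$ sending the generator $\sqrt{h}_f$ to $1$. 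Since $\sqrt{h}_f$ generates $G_f$, the $G_f$-action on the Milnor fibre $V_f$ coincides with the $\ZZ_d$-action generated by the transformation $\sqrt{h}_f$; the subgroup $H\subset G_f$ of index $m$ corresponds to $\ZZ_{d/m}\subset\ZZ_d$, and $\chi(V_f^{(H)}/G_f)=\chi(V_f^{(H)})\,|H|/|G_f|=\chi(V_f^{(\ZZ_{d/m})})/m=:s_m$. Comparing with the one-to-one correspondence between $\Kring{\ZZ_d}$ and functions of the form~(\ref{phi}) for this $d$ recalled above, one sees that $\zeta_f^{G_f}$ of~(\ref{G-zeta}) goes to the zeta function $\zeta_{\sqrt{h}_f}(t)=\prod_{m|d}(1-t^m)^{s_m}$ of the transformation $\sqrt{h}_f$, and hence $\widetilde{\zeta}_f^{G_f}=\zeta_f^{G_f}-1$ goes to $\widetilde{\zeta}_{\sqrt{h}_f}(t)$. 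The same computation for $\widetilde{f}$ with the generator $\sqrt{h}_{\widetilde{f}}$ of $G_{\widetilde{f}}$ shows that $\widetilde{\zeta}_{\widetilde{f}}^{G_{\widetilde{f}}}$ corresponds to $\widetilde{\zeta}_{\sqrt{h}_{\widetilde{f}}}(t)$.

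Finally I would identify $D_{G_f}$ through these dictionaries. Since $G_f$ and $G_f^*$ are cyclic, every subgroup is characteristic, so the subgroup $H\subset G_f$ of index $m$ has dual subgroup $\widetilde{H}\subset G_f^*$ of order $m$ and index $d/m$; hence $D_{G_f}$ carries the $m$-point orbit $[G_f/H]$ to the $(d/m)$-point orbit $[G_f^*/\widetilde{H}]$, which on the level of functions is the map $\prod_{m|d}(1-t^m)^{s_m}\mapsto\prod_{m|d}(1-t^{d/m})^{s_m}$; this differs from the classical Saito dual~(\ref{Saito}) with respect to $d$ only by inversion. Feeding $\widetilde{\zeta}_f^{G_f}$ through Theorem~\ref{theo} and recalling that multiplication by $(-1)^n$ in the Burnside ring amounts to raising the associated function to the power $(-1)^n$, one obtains
$$
\widetilde{\zeta}_{\sqrt{h}_{\widetilde{f}}}(t)=\Bigl(\bigl(\widetilde{\zeta}_{\sqrt{h}_f}^{*}(t)\bigr)^{-1}\Bigr)^{(-1)^{n}}=\bigl(\widetilde{\zeta}_{\sqrt{h}_f}^{*}(t)\bigr)^{(-1)^{n-1}},
$$
which is the asserted identity. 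I expect the main obstacle to be exactly this bookkeeping: one must check that $D_{G_f}$ does not depend on the chosen generators $\sqrt{h}_f,\sqrt{h}_{\widetilde{f}}$ — which holds precisely because cyclic groups have characteristic subgroups — and that the extra factor $(-1)^n$ coming from Theorem~\ref{theo}, together with the single sign by which $D_{\ZZ_d}$ differs from the classical Saito dual~(\ref{Saito}), combine to yield exactly the exponent $(-1)^{n-1}$ and nothing else.
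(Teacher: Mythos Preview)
Your proposal is correct and follows exactly the route the paper indicates: the paper's entire argument is the sentence preceding the Corollary, namely that the existence of geometric roots forces $G_f$ (hence $G_{\widetilde f}\cong G_f^*$) to be cyclic, and that for cyclic $\ZZ_d$ the equivariant Saito duality $D_{\ZZ_d}$ differs from the classical one~(\ref{Saito}) only by the sign $(-1)$, so Theorem~\ref{theo} yields the claim with exponent $(-1)^n\cdot(-1)=(-1)^{n-1}$. You have simply unpacked this sentence in detail, including the dictionary between $\widetilde{\zeta}_f^{G_f}$ and $\widetilde{\zeta}_{\sqrt{h}_f}(t)$ and the verification that $D_{G_f}$ sends $m$-point orbits to $(d/m)$-point orbits; nothing is missing and nothing is different.
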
 

This statement was proved in \cite{MMJ} for non-degenerate invertible polynomials in 3 variables and
for such polynomials in an arbitrary number
of variables of pure loop or chain type. 

%%%%%%%%%%%%%%%%%%%%%%%%%%%%%%%%%%%%%%%

\bigskip
\noindent Leibniz Universit\"{a}t Hannover, Institut f\"{u}r Algebraische Geometrie,\\
Postfach 6009, D-30060 Hannover, Germany \\
E-mail: ebeling@math.uni-hannover.de\\

\medskip
\noindent Moscow State University, Faculty of Mechanics and Mathematics,\\
Moscow, GSP-1, 119991, Russia\\
E-mail: sabir@mccme.ru

\end{document}